\documentclass[
%%% one of
submission
%final
%proceedings
%%% if you compile a final version for the old OJS platform
% , ojs
%%% if all authors have the same affiliation
% , nomarks
]{dmtcs-episciences}

% DON'T LOAD ANY STYLES THAT CHANGE THE PAGE LAYOUT
% AND DON'T CHANGE THE PAGE LAYOUT BY HAND, EITHER.

\usepackage[utf8]{inputenc}
\usepackage{subfigure}

% graphicx is now loaded automatically no need to put this in here anymore.
%
%\usepackage{graphicx}

% We strongly recommend to use natbib. Your colleagues deserve to be
% named in your text. PLEASE, ADAPT YOUR TEXT ACCORDINGLY, such that
% citations are grammatically correct.
\usepackage[round]{natbib}
\usepackage{amssymb,amsmath,bm}
\usepackage{caption}
\usepackage{constants,color}
\usepackage{enumerate}
\usepackage{bbm}
\usepackage{appendix}
\allowdisplaybreaks
\topmargin 0in \textwidth 6.1in \oddsidemargin 0in \textheight
8in

% perl texref.pl -unref oco924.tex > labelrefs

\newcommand{\bea}{\begin{eqnarray}}
\newcommand{\ena}{\end{eqnarray}}
\newcommand{\beas}{\begin{eqnarray*}}
\newcommand{\enas}{\end{eqnarray*}}
\newcommand{\beq}{\begin{equation}}
\newcommand{\enq}{\end{equation}}
\newcommand{\ignore}[1]{}

%\newcommand{\ws}{{\widetilde \sigma}}
%\newcommand{\abs}[1]{\left\vert#1\right\vert}

%Erd\"{o}s-R\'{e}nyi

\newtheorem{theorem}{Theorem}[section]

\newtheorem{lemma}{Lemma}[section]

\newtheorem{remark}{Remark}[section]

\frenchspacing

\author{\"{U}m\.{i}t I\c{s}lak \affiliationmark{1}
  \and Alperen Y. \"{O}zdemir \affiliationmark{2}}
  
\title[On an Alternative Sequence Comparison Statistic of Steele]{On an Alternative Sequence Comparison Statistic of Steele}
% put your affiliation here, not your full address.
% If you like to give away your email or other parts of your address,
% THIS IS NOT THE RIGHT PLACE, your address will change, this paper
% will not.
% Just watch that your personal data that you want to communicate on
% the episcience server is always up to date.
\affiliation{
  % one line per affiliation, no postal codes, grant numbers or similar
  Bo\u{g}azi\c{c}i University, Department of Mathematics, Istanbul, Turkey\\
  University of Southern California, Department of Mathematics, Los Angeles, California}
\keywords{random words, similarity measures, longest common subsequences, central limit theorem}
%\affiliation[*]{University of Southern California}

\received{2019-9-4}

\revised{2020-4-21}

\accepted{2020-4-25}

\begin{document}

\publicationdetails{22}{2020}{1}{18}{5745}

\maketitle
\begin{abstract}
  The purpose of this paper is to study a statistic that is used to compare the similarity between two strings, which is first introduced by Michael Steele in 1982. It was proposed as an alternative to the length of the longest common subsequences, for which the variance problem is still open. Our results include moment asymptotics and  distributional asymptotics for Steele's statistic and a variation of it in random words.
\end{abstract}

\section{Introduction}\label{sec:intro}

The most well-known approach in  sequence comparison is the use of the longest common subsequences. This is  related partially to its wide range applications in various field such as computational biology, computer science and bioinformatics, and partially to the challenges that it presents in theory. 
By definition, $LC_n$, the length of the longest common subsequences  of
 sequences $X_1\cdots X_n$ and $Y_1\cdots Y_n$, is the maximal
integer $k\in [n] := \{1,\dots, n\}$, such that there exist $1\le i_1<
\cdots < i_k\le n$ and $1\le j_1<\cdots <j_k\le n$, such that
$$
X_{i_{\ell}}=Y_{j_{\ell}} \quad \quad {\rm for \quad all} \quad \quad
\ell=1,2,\dots, k.
$$
The theory  of $LC_n$ has a long history starting with the well-known
result of Chv\'atal and Sankoff \cite{CS}. They show that if $X_i$'s and $Y_j$'s are independent and identically distributed (i.i.d.) discrete random variables, and if the sequences are independent among themselves,
\begin{equation*}\label{lcmean}
\lim_{n \to \infty}\frac{\mathbb{E}LC_n}{n}=\gamma_m^*,
\end{equation*}
where $\gamma_m^*$ is some constant in $[0,1]$. 
To this day, the exact value of $\gamma_m^*$ (which depends
on the distribution of $X_1$ and on the size of the alphabet) is
unknown, even in  the simplest case where one has  uniform Bernoulli
random variables. Furthermore, the order of $Var(LC_n)$  and the asymptotic distribution of $LC_n$ are  still unknown for uniform Bernoulli
random variables. We refer   \cite{LM2009}  and \cite{HoudreIslak:2014} to the reader for  some  recent progress towards these problems. The former of these two shows that the variance of $LC_n$ in random words is of order $n$  under certain asymmetry conditions, and the latter one proves that the same conditions yield a central limit theorem after proper centering and scaling. 
Also see \cite{GHI:2015},  \cite{HoudreIslak:2014} and \cite{Jin}   for recent results  similar to the   ones mentioned    for a score function setting, for independent uniformly  random permutations and for Mallows permutations, respectively. 

There had been various alternatives to the longest common subsequences for sequence comparison where some of the technical difficulties $LC_n$  do not emerge. Most of these rely on comparison of words based on matching   of subsequences of given two or more sequences. As two general references sequence comparisons and word statistics, we refer to 
\cite{W:84} and  \cite{RSW:00}. More specifically,  Waterman in \cite{W:86}  studies the longest match of two sequences interrupted by at most $k$ dismatches.
\cite{GW:92} is on  a sequence comparison test  based on $k$-word matches on  a diagonal of a sequence comparison. In particular, they require that at least $i$ of the $k$ letters of the words to match where $ i \leq k$, and they provide Poisson approximations for certain statistics. 
 \cite{RW:06} studies length of the longest exact match of a random sequence across another sequence  again in terms of distributional approximations.    More recently, Reinert et al., \cite{Retal09},  assume that $\mathbf{A} = A_1 \ldots A_{n_1}$ and  $\mathbf{B} =B_1 \ldots B_{n_2}$ are words where the letters are from a finite alphabet $\mathcal{A}$ of size $m$.  For $\mathbf{w} = (w_1,\ldots,w_k) \in \mathcal{A}^k$, they define $$X_{\mathbf{w} } = \sum_{i = 1}^{n_1 - k +1} \mathbf{1}(A_i = w_1,\ldots, A_{i+k - 1} = w_k)$$  which counts the number of occurences of $\mathbf{w} $ in $\mathbf{A}$. Similarly, letting $Y_{\mathbf{w}}$   count the number of occurences of $\mathbf{w} $ in $\mathbf{B}$, the sequence comparison statistic is defined by $$D = \sum_{\mathbf{w} \in \mathcal{A}^k} X_{\mathbf{w}} Y_{\mathbf{w}}.$$    Afterwards, the authors study hypothesis testing based on this statistic. Also, see the continuation work \cite{Wanetal}.

We study yet another sequence comparison statistic which was proposed by M. Steele in 1982 during his investigations on the longest common subsequence problem \cite{Steele1982}.  This also compares matchings of subsequences - but this time involving all subsequences possible. Namely, letting $X_1,\cdots,X_n$ and $Y_1,\cdots,Y_n$ be uniformly distributed over a finite alphabet, the statistic of Steele is given by $$T_n = \sum_{k=1}^n T_{n,k},$$ where 

\begin{equation} \label{teneke}
T_{n.k} =  \sum_{1 \leq i_1 < \cdots < i_k \leq n} \sum_{1 \leq j_1 < \cdots < j_k \leq n}  \mathbf{1} (X_{i_1} = Y_{j_1},\ldots, X_{i_k} = Y_{j_k}).
\end{equation}
The purpose of this paper is to analyze  $T_{n,k}$  in terms of their moment asymptotics and to show that a central limit theorem holds for $T_{n,k}$ when $k$ is kept fixed. The ultimate objective is to study $T_n,$ which is relatively difficult, so is postponed to a future paper. 

Let us now fix some notation for the following sections. First, $=_d$, $\rightarrow_d$ and
$\rightarrow_{\mathbb{P}}$ are used for equality in distribution,
convergence in distribution and convergence in probability,
respectively. $\mathcal{G}$ denotes a standard normal random
variable, and $d_K$ is used for Kolmogorov distance between probability measures. Finally, for two
sequences $a_n, b_n$, we write $a_n \sim b_n$ for $\lim_{n
\rightarrow \infty} a_n / b_n =1$.

The rest of the paper is organized as follows. In Section \ref{sec:randomwordscase}, we identify bounds on the first two moments of the statistics aforementioned. Then, in Section \ref{sec:CLT}, a central limit theorem for $T_{n,k}$ is proven. The next section deals with the computation time for $T_{n,k}$ and shows that the corresponding  running time  is $\Theta(k n^2)$.  In Section \ref{sec:alphabet}, we analyze the first moment asymptotics of $T_{n}$ with respect to different sizes of the alphabet.

\section{Moments of  $T_{n,k}$ and $T_n$} \label{sec:randomwordscase}
Unless otherwise mentioned, from here on the random variables of the independent sequences $X_1, \dots, X_n$ and $Y_1,\dots, Y_n$ are i.i.d. with common distribution that is uniform over a finite alphabet of size $a.$ Our first result concerning the statistics \eqref{teneke} is as follows.
\begin{theorem}\label{momTnk}
Let    $k \in \mathbb{N}$ be fixed.  We have  $$\mathbb{E}[T_{n,k}] = \binom{n}{k}^2 \frac{1}{a^{k}} \sim \frac{n^{2k}}{(k!)^2 a^{k}},$$
and  \begin{equation}\label{ETnksquarebounds}
\binom{n}{k}^4   \frac{1}{a^{2k}} \leq \mathbb{E}[T_{n,k}^2] \leq  \binom{n}{k}^2  \frac{1}{a^{k}} \,  \sum_{j=0}^k \binom{n-k}{j}^2 \binom{n -j }{k-j}^2  \frac{1}{a^{j}}.
\end{equation}
Moreover,  the lower bound in \eqref{ETnksquarebounds} satisfies $$\binom{n}{k}^4   \frac{1}{a^{2k}}  \sim \frac{1}{(k!)^4a^{2k}} n^{4k}, \qquad n \rightarrow \infty,$$ and the upper bound in  \eqref{ETnksquarebounds} satisfies 
$$ \sum_{j=0}^k \binom{n-k}{j}^2 \binom{n}{k-j}^2  \frac{1}{a^{j}} \sim  \left(\frac{1}{(k!)^2 a^{k}}  \sum_{j=0}^k  \frac{1}{(j!)^2 ((k-j)!)^2 a^j}  \right) n^{4k}, \qquad n \rightarrow \infty.$$
\end{theorem}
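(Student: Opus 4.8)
The plan is to handle the four assertions of Theorem~\ref{momTnk} in turn, the only substantial one being the upper bound in \eqref{ETnksquarebounds}. Throughout I write $I=\{i_1<\cdots<i_k\}$ and $J=\{j_1<\cdots<j_k\}$ for the two index sets, and abbreviate by $\{X_I=Y_J\}$ the event appearing in \eqref{teneke}, i.e.\ that all $k$ equalities $X_{i_\ell}=Y_{j_\ell}$ hold. For the mean I would simply use linearity of expectation: since $i_1,\dots,i_k$ are distinct and $j_1,\dots,j_k$ are distinct, the $k$ equalities involve $2k$ independent uniform letters and are themselves mutually independent, each of probability $1/a$, so $\mathbb{P}(X_I=Y_J)=a^{-k}$ for every one of the $\binom{n}{k}^2$ choices of $(I,J)$. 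Hence $\mathbb{E}[T_{n,k}]=\binom{n}{k}^2 a^{-k}$, and the stated asymptotics follows from $\binom{n}{k}\sim n^k/k!$.

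Next I would set up the second moment by expanding the square into a double sum over pairs of matchings, $\mathbb{E}[T_{n,k}^2]=\sum_{(I,J)}\sum_{(I',J')}\mathbb{P}(X_I=Y_J,\ X_{I'}=Y_{J'})$. The joint probability is governed by the equality structure of the $2k$ matched pairs: letters forced to be equal form the connected components of a bipartite graph on the used positions, and the probability equals $a^{-(V-C)}$ with $V$ the number of distinct positions used and $C$ the number of components. The lower bound needs none of this, however: nonnegativity of the variance gives $\mathbb{E}[T_{n,k}^2]\ge(\mathbb{E}[T_{n,k}])^2=\binom{n}{k}^4 a^{-2k}$, which is exactly the left-hand side of \eqref{ETnksquarebounds}.

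The heart of the matter is the upper bound, and here I would condition on the first matching $\{X_I=Y_J\}$ and estimate the inner sum over second matchings. Call a pair $(i'_m,j'_m)$ of the second matching \emph{fresh} if $i'_m\notin I$ and $j'_m\notin J$, and let $j$ be the number of fresh pairs. The $j$ fresh equalities involve $2j$ letters disjoint from those fixed by the first matching, are mutually independent, and each hold with probability $1/a$; bounding the remaining $k-j$ equalities crudely by $1$ gives $\mathbb{P}(X_{I'}=Y_{J'}\mid X_I=Y_J)\le a^{-j}$. It then remains to count, for each $j$, the pairs $(I',J')$ with $j$ fresh pairs: choosing the $j$ fresh $X$-positions among the $n-k$ indices outside $I$ and, crudely, the other $k-j$ positions of $I'$ among the remaining $n-j$ indices produces the factor $\binom{n-k}{j}\binom{n-j}{k-j}$ on the $X$-side, and the symmetric $Y$-side squares it. Multiplying by $a^{-j}$, summing over $j$, and restoring the prefactor $\binom{n}{k}^2 a^{-k}=\mathbb{E}[T_{n,k}]$ from the first matching yields the right-hand side of \eqref{ETnksquarebounds}. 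The hard part will be precisely this counting: because the pairing is by rank, the fresh pairs of a given $(I',J')$ cannot be chosen freely, so I must argue that the displayed product is a genuine over-count of the configurations in class $j$. Over-counting is harmless for an upper bound and is exactly what makes the estimate loose (for instance the $j=0$ term replaces the true disjoint probability $a^{-2k}$ by $a^{-k}$).

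Finally, the two asymptotic statements for the bounds are routine applications of $\binom{n}{m}\sim n^m/m!$ for fixed $m$. For the lower bound, $\binom{n}{k}^4 a^{-2k}\sim n^{4k}/((k!)^4 a^{2k})$. For the upper bound, each summand satisfies $\binom{n-k}{j}^2\binom{n-j}{k-j}^2 a^{-j}\sim n^{2k}/((j!)^2((k-j)!)^2 a^j)$; multiplying by $\binom{n}{k}^2 a^{-k}\sim n^{2k}/((k!)^2 a^k)$ and summing over $j$ collects the coefficient of $n^{4k}$ in the stated form (note that $\binom{n-j}{k-j}$ and $\binom{n}{k-j}$ share the same leading term, so the minor discrepancy between \eqref{ETnksquarebounds} and the final display is immaterial to the asymptotics).
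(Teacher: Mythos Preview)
Your proposal is correct and follows essentially the same approach as the paper. The paper also defines the set of ``fresh'' pairs (written there as $(\mathcal{I}_1',\mathcal{I}_2')\ominus(\mathcal{I}_1,\mathcal{I}_2)$), bounds the full indicator by the indicator of the fresh equalities, uses independence of that indicator from $\chi(\mathcal{I}_1,\mathcal{I}_2)$, and then over-counts the configurations with $j$ fresh pairs; the only cosmetic difference is that the paper writes the count as $\binom{n-k}{j}^2\bigl(\sum_{\ell}\binom{k}{\ell}\binom{n-k-j}{k-j-\ell}\bigr)^2$ and then collapses it via Vandermonde to your $\binom{n-k}{j}^2\binom{n-j}{k-j}^2$.
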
 

\begin{proof}
The expectation formula and the lower bound for the second moment are straightforward. We study the upper bound for the second moment with the expression
$$\mathbb{E}[T_{n,k}^2]  = \sum_{(\mathcal{I}_1,\mathcal{I}_2), (\mathcal{I}_1',\mathcal{I}_2')}  \mathbb{E}\left[\mathbf{1}\left(\bigcap_{i_s \in \mathcal{I}_1, j_s \in \mathcal{I}_2} \{X_{i_s} = Y_{j_s} \} \right) \mathbf{1}\left(\bigcap_{i_s' \in \mathcal{I}_1', j_s' \in \mathcal{I}_2'}\{ X_{i_s'} = Y_{j_s'} \} \right)\right],$$ where the summation $ \sum_{(\mathcal{I}_1,\mathcal{I}_2), (\mathcal{I}_1',\mathcal{I}_2')}$ is taken over  all subsets  $\mathcal{I}_1,\mathcal{I}_2,\mathcal{I}_1',\mathcal{I}_2'$ of $[n]$ each of which has cardinality $k$.  We can rewrite this expression as $$\mathbb{E}[T_{n,k}^2]  = \sum_{(\mathcal{I}_1,\mathcal{I}_2), (\mathcal{I}_1',\mathcal{I}_2')} \mathbb{E}[ \chi(\mathcal{I}_1,\mathcal{I}_2) \chi(\mathcal{I}_1',\mathcal{I}_2')],$$ where $\chi(\mathcal{I}_1,\mathcal{I}_2)$ is the indicator of the event  $\bigcap_{i_s \in \mathcal{I}_1, j_s \in \mathcal{I}_2} \{X_{i_s} = Y_{j_s}\}$. Then define  $(\mathcal{I}_1', \mathcal{I}_2') \ominus (\mathcal{I}_1,\mathcal{I}_2)$ to be  the set 
\begin{equation*}
\{(i_s', j_s') \in (\mathcal{I}_1', \mathcal{I}_2'): i_s' \notin \mathcal{I}_1 \; \text{and} \; j_s' \notin \mathcal{I}_2\}.
\end{equation*}  To give a simple example of the set defined above, take $n=3,\, k=2$ and define $\mathcal{I}_1=\{1,2\}, \mathcal{I}_2=\{1,3\}, \mathcal{I}_1'=\{1,3\}$ and $\mathcal{I}_2'=\{1,2\}.$ Then,  $(\mathcal{I}_1', \mathcal{I}_2') \ominus (\mathcal{I}_1,\mathcal{I}_2)$ is  $\{(3,2)\}.$   Clearly, $$\chi(\mathcal{I}_1', \mathcal{I}_2') \leq \chi ((\mathcal{I}_1', \mathcal{I}_2')  \ominus (\mathcal{I}_1,\mathcal{I}_2)),$$ and $\chi((\mathcal{I}_1', \mathcal{I}_2') \ominus (\mathcal{I}_1,\mathcal{I}_2))$ is independent of $\chi(\mathcal{I}_1, \mathcal{I}_2)$.  Therefore 
\begin{eqnarray*}
\mathbb{E}[T_{n,k}^2]  &=& \sum_{(\mathcal{I}_1,\mathcal{I}_2), (\mathcal{I}_1',\mathcal{I}_2')} \mathbb{E}[ \chi(\mathcal{I}_1,\mathcal{I}_2) \chi(\mathcal{I}_1',\mathcal{I}_2')] \\
&\leq&  \sum_{(\mathcal{I}_1,\mathcal{I}_2), (\mathcal{I}_1',\mathcal{I}_2')} \mathbb{E}[ \chi(\mathcal{I}_1,\mathcal{I}_2) \chi ((\mathcal{I}_1', \mathcal{I}_2')  \ominus (\mathcal{I}_1,\mathcal{I}_2))] \\ 
&=&  \sum_{(\mathcal{I}_1,\mathcal{I}_2), (\mathcal{I}_1',\mathcal{I}_2')} \mathbb{E}[\chi(\mathcal{I}_1,\mathcal{I}_2)] \mathbb{E}[ \chi ((\mathcal{I}_1', \mathcal{I}_2')  \ominus (\mathcal{I}_1,\mathcal{I}_2))] \\
&=& \sum_{(\mathcal{I}_1, \mathcal{I}_2)} \frac{1}{a^{k}} \sum_{(\mathcal{I}_1',\mathcal{I}_2')}  \mathbb{E}[ \chi ((\mathcal{I}_1', \mathcal{I}_2')  \ominus (\mathcal{I}_1,\mathcal{I}_2))]  \\
&=& \sum_{(\mathcal{I}_1, \mathcal{I}_2)} \frac{1}{a^{k}} \sum_{j=0}^k  \sum_{|(\mathcal{I}_1',\mathcal{I}_2')  \ominus (\mathcal{I}_1, \mathcal{I}_2)| = j}  \mathbb{E}[ \chi ((\mathcal{I}_1', \mathcal{I}_2')  \ominus (\mathcal{I}_1,\mathcal{I}_2))]  \\
&\leq &  \sum_{(\mathcal{I}_1, \mathcal{I}_2)} \frac{1}{a^{k}} \, \sum_{j=0}^k \binom{n-k}{j}^2\left(\sum_{\ell = 0}^{k - j} \binom{k }{\ell} \binom{n - k - j}{k -\ell-j}\right)^2 \frac{1}{a^{j}} \\
&=& \binom{n}{k}^2  \frac{1}{a^{k}} \,  \sum_{j=0}^k \binom{n-k}{j}^2 \binom{n-j}{k-j}^2  \frac{1}{a^{j}}.
\end{eqnarray*}
The asymptotics for the lower bound  of $\mathbb{E}[T_{n,k}^2]$ is immediate from the Stirling formula. For the upper bound, we observe that 
 \begin{align*}
&\hspace{-0.5in} \binom{n}{k}^2  \frac{1}{a^{k}} \,  \sum_{j=0}^k \binom{n-k}{j}^2 \binom{n - j}{k-j}^2  \frac{1}{a^{j}}   \\ & \sim  \frac{n^{2k}}{(k!)^2 a^k} \sum_{j=0}^k \frac{1}{(j!)^2 ((k-j)!)^2} \frac{((n-j)!)^2 ((n-k)!)^2 }{((n -k -j)!)^2 ((n -k)!)^2 } \frac{1}{a^j} \\
& = \frac{n^{2k}}{(k!)^2 a^k} \sum_{j=0}^k \frac{1}{(j!)^2 ((k-j)!)^2} \left( \frac{(n-j)!   }{(n -k -j)!    }\right)^2 \frac{1}{a^j} \\ 
& \sim    \frac{n^{2k}}{(k!)^2 a^{k}} \sum_{j=0}^k  \frac{1}{(j!)^2 ((k-j)!)^2 a^j}  n^{2k}  \\
& = \left(\frac{1}{(k!)^2 a^{k}}  \sum_{j=0}^k  \frac{1}{(j!)^2 ((k-j)!)^2 a^j}  \right) n^{4k}.
\end{align*}

\end{proof}
 
Theorem  \ref{momTnk} shows that the order of  $\mathbb{E}[T_{n,k}^2]$ is $n^{4k}$. Beyond this, the exact computation of the second moment looks quite involved, and we intend to analyze it in a subsequent work.  An even more challenging work would be to study the moments when $k$ grows along with $n$.

\begin{remark} (i.)
Certain results in this paper can also be generalized to non-uniform random words.  For example, if the independent sequences $X_1,\ldots,X_n$ and $Y_1,\ldots,Y_n$ consist of  i.i.d. random variables with support $[a]$ and with distributions $p_j = \mathbb{P}(X_1 =j)$, we  may   as before  define $$T_{n,\mathbf{p}} = \sum_{k=1}^n \sum_{1 \leq i_1 < \cdots < i_k \leq n} \sum_{1 \leq j_1 < \cdots < j_k \leq n}  \mathbf{1} (X_{i_1} = Y_{j_1},\ldots, X_{i_k} = Y_{j_k}).$$ In this case, one easily obtains $$  \mathbb{E}[T_{n,\mathbf{p}}] =\sum_{k=0}^n \binom{n}{k} \left( \sum_{j=1}^a p_j^2\right)^k.$$ 
It is also possible to give a similar computation for $ \mathbb{E}[(T_{n,\mathbf{p}})^2]$, but this time it will be more complicated in terms of notation.  Below we restrict ourselves to the uniform case due to keeping notational simplicity and due to  the fact that asymptotic computations will require certain growth  assumptions on  $\mathbf{p}$, which we do not think that will contribute to the  gist of the paper. 
\end{remark}

\section{Central limit theorem for $T_{n,k}$} \label{sec:CLT}

Followig the moment calculations and estimates, we prove a central limit theorem for \eqref{teneke} in this section.
\begin{theorem}\label{thm:CLTTnk}
We have
$$\frac{T_{n,k} - \mathbb{E}[T_{n,k}]}{\sqrt{Var(T_{n,k})}} \longrightarrow_d \mathcal{G}, \qquad n \rightarrow \infty,$$ where 
$\mathbb{E}[T_{n,k}] = \binom{n}{k}^2 \frac{1}{a^k}$, and $\mathbb{E}[T_{n,k}^2]$ satisfies the bounds in Theorem \ref{momTnk}.
\end{theorem}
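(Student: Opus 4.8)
The plan is to exploit the following reformulation of the statistic. For a word $\mathbf{w} = (w_1,\dots,w_k) \in \mathcal{A}^k$ let $S_{\mathbf{w}}^X = \sum_{1 \le i_1 < \cdots < i_k \le n} \mathbf{1}(X_{i_1}=w_1,\dots,X_{i_k}=w_k)$ be the number of occurrences of $\mathbf{w}$ as a subsequence of $X_1\cdots X_n$, and define $S_{\mathbf{w}}^Y$ analogously. Grouping the summands of \eqref{teneke} according to the common value $(X_{i_1},\dots,X_{i_k}) = (Y_{j_1},\dots,Y_{j_k})$ spelled by a matched pair gives the identity
\[
T_{n,k} = \sum_{\mathbf{w} \in \mathcal{A}^k} S_{\mathbf{w}}^X \, S_{\mathbf{w}}^Y = \langle S^X, S^Y \rangle,
\]
so that $T_{n,k}$ is the inner product of the two independent, identically distributed random vectors $S^X = (S_{\mathbf{w}}^X)_{\mathbf{w}}$ and $S^Y = (S_{\mathbf{w}}^Y)_{\mathbf{w}}$ indexed by $\mathcal{A}^k$. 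Each coordinate $S_{\mathbf{w}}^X$ is a bounded order-$k$ U-statistic in the i.i.d.\ letters $X_1,\dots,X_n$, so the natural engine is the Hoeffding (ANOVA) decomposition of the vector $S^X$, a multivariate U-statistic central limit theorem for it, and then analysis of the bilinear functional $\langle\cdot,\cdot\rangle$.

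First I would pin down the order of $Var(T_{n,k})$, which is needed even to make sense of the normalization and which the bounds of Theorem \ref{momTnk} do not supply (they are dominated by $(\mathbb{E} T_{n,k})^2 \sim n^{4k}$). Writing $\mu = \mathbb{E}[S_{\mathbf{w}}^X] = \binom{n}{k} a^{-k}$, the same for every $\mathbf{w}$, and $C_{\mathbf{w}\mathbf{w}'} = Cov(S_{\mathbf{w}}^X, S_{\mathbf{w}'}^X)$, independence of $X$ and $Y$ gives
\[
Var(T_{n,k}) = \sum_{\mathbf{w},\mathbf{w}'} C_{\mathbf{w}\mathbf{w}'}^2 + 2\mu^2 \sum_{\mathbf{w},\mathbf{w}'} C_{\mathbf{w}\mathbf{w}'}.
\]
The key observation is that $\sum_{\mathbf{w}} S_{\mathbf{w}}^X = \binom{n}{k}$ is deterministic, so $\sum_{\mathbf{w},\mathbf{w}'} C_{\mathbf{w}\mathbf{w}'} = Var\big(\sum_{\mathbf{w}} S_{\mathbf{w}}^X\big) = 0$ and the would-be leading term of order $n^{4k-1}$ disappears. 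Since each $C_{\mathbf{w}\mathbf{w}'}$ is a covariance of order-$k$ U-statistics and hence of order $n^{2k-1}$, and the diagonal entries $C_{\mathbf{w}\mathbf{w}} = Var(S_{\mathbf{w}}^X)$ are strictly positive, this leaves $Var(T_{n,k}) = \sum_{\mathbf{w},\mathbf{w}'} C_{\mathbf{w}\mathbf{w}'}^2 \asymp n^{4k-2}$; isolating the constant is a finite (if tedious) computation of the leading coefficients of the $C_{\mathbf{w}\mathbf{w}'}$.

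With the scale $n^{2k-1}$ identified, I would apply the Hoeffding decomposition coordinatewise and a multivariate U-statistic CLT to obtain $(S^X - \mu\mathbf{1})/n^{k-1/2} \Rightarrow \zeta^X$ and independently $(S^Y - \mu\mathbf{1})/n^{k-1/2} \Rightarrow \zeta^Y$, with $\zeta^X,\zeta^Y$ centered Gaussian vectors on $\mathcal{A}^k$, and then read off the limit of $(T_{n,k} - \mathbb{E} T_{n,k})/n^{2k-1}$. Here lies the main obstacle, and it is the same degeneracy already seen in the variance: the first-order H\'ajek projection of $S^X$, contracted against the constant mean vector $\mu\mathbf{1}$, vanishes because of the constraint $\sum_{\mathbf{w}} S_{\mathbf{w}}^X = \binom{n}{k}$, so the fluctuations are governed not by a linear (hence automatically Gaussian) statistic but by the second-order bilinear form $\sum_{\mathbf{w}}(S_{\mathbf{w}}^X-\mu)(S_{\mathbf{w}}^Y-\mu)$, whose limit is $\langle\zeta^X,\zeta^Y\rangle$. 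Establishing that this bilinear form is asymptotically Gaussian is the crux and is genuinely delicate: for a fixed finite alphabet it is a fixed-dimensional bilinear form in two independent Gaussian vectors and is therefore \emph{not} Gaussian (already for $k=1$, $a=2$ it reduces to a constant multiple of a product of two independent Gaussians), so a Gaussian conclusion appears to require the effective dimension $a^k$ to grow — or an asymmetry in the letter distribution that revives the linear term — so that a central limit theorem for the sum $\sum_{\mathbf{w}} \zeta^X_{\mathbf{w}}\zeta^Y_{\mathbf{w}}$ can take over. I would expect the decisive work to be identifying and justifying precisely this regime; note that a direct dependency-graph Stein bound on \eqref{teneke} will not suffice, since with $N \asymp n^{2k}$ summands, maximal degree $D \asymp n^{2k-1}$ and $\sigma \asymp n^{2k-1}$ the controlling ratio $N D^2/\sigma^3 \asymp n$ does not vanish.
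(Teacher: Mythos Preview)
Your route---writing $T_{n,k}=\langle S^X,S^Y\rangle$ and analyzing the bilinear form via Hoeffding projections---is quite different from the paper's. The paper introduces auxiliary i.i.d.\ uniforms $U_1,\dots,U_n,V_1,\dots,V_n$, uses them to replace the ordering constraints $i_1<\cdots<i_k$ and $j_1<\cdots<j_k$ by indicators $\mathbf 1(U_{i_1}<\cdots<U_{i_k})$ and $\mathbf 1(V_{j_1}<\cdots<V_{j_k})$, declares the resulting sum to be a $U$-statistic of degree $k$ in the sample $Z_i=(X_i,Y_i,U_i,V_i)$, and invokes the Chen--Shao Berry--Esseen bound for non-degenerate $U$-statistics.

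The obstacle you identify is real, and in fact it shows that the theorem as stated is false for uniform letters on a fixed alphabet. Your $k=1$, $a=2$ example is already decisive: with $A=\#\{i:X_i=1\}-n/2$ and $B=\#\{j:Y_j=1\}-n/2$ one has exactly $T_{n,1}=n^2/2+2AB$, hence
\[
\frac{T_{n,1}-\mathbb{E}[T_{n,1}]}{\sqrt{Var(T_{n,1})}}
=\frac{A}{\sqrt{n/4}}\cdot\frac{B}{\sqrt{n/4}}
\ \longrightarrow_d\ G_1G_2,
\]
the product of two independent standard normals, which is not Gaussian. So the gap is not in your proposal but in the paper's argument. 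Two things break there. First, $T_{n,k}$ is a \emph{double} sum over a $k$-subset of $X$-indices and a separate $k$-subset of $Y$-indices; it cannot be the $U$-statistic of a degree-$k$ kernel in the single sample $(Z_i)_{i\le n}$, and indeed the paper's kernel $g$ is written with $k$ arguments indexed by $i_1,\dots,i_k$ while simultaneously referring to variables indexed by $j_1,\dots,j_k$ that are not among its inputs. Second, even if one repairs this and expresses $T_{n,k}$ as a combination of $U$-statistics of degrees $k,\dots,2k$ in the $Z_i$, the non-degeneracy hypothesis $\sigma_1^2>0$ of the cited $U$-statistic CLT fails, for precisely the reason you isolated: under the uniform law the mean vector is a multiple of $\mathbf 1$ and $\sum_{\mathbf w}S^X_{\mathbf w}=\binom{n}{k}$ is deterministic, so the first Hoeffding projection in the relevant direction vanishes. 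Your diagnosis---that a Gaussian limit would require either a growing alphabet or a non-uniform letter law that restores a nonzero linear term $\langle\mu,\tilde S\rangle$---is exactly right.
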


\begin{proof}
We start with an observation that
 for any $r \in [n]$,
\begin{equation*}
\mathbf{1}(X_r = Y_r)   = \frac{\mathbf{1}(X_r \geq Y_r) + \mathbf{1}(X_r \leq Y_r) - \mathbf{1}(X_r > Y_r) - \mathbf{1}(X_r < Y_r)}{2}.
\end{equation*}
This implies that 
\begin{align*}
& \mathbf{1}(X_{i_1} = Y_{j_1},\ldots, X_{i_k} = Y_{i_k})  \\ & \hspace{0.25in} = 
\frac{1}{2^k} \prod_{s=1}^k  \left(\mathbf{1}(X_{i_s}  
 \geq Y_{j_s}) + \mathbf{1}(X_{i_s} \leq Y_{j_s}) - \mathbf{1}(X_{i_s} > Y_{j_s}) - \mathbf{1}(X_{i_s} < Y_{j_s})\right).
\end{align*}

 Now let $\{U_i\}_{i \in \mathbb{N}}$ and $\{V_i\}_{i \in \mathbb{N}}$ be i.i.d. random variables that are uniformly distributed over $(0,1)$,  and  define permutations $\sigma$ and $\gamma$ in $S_n$ so that 
\begin{equation}\label{uv}
U_{\sigma(1)} < \cdots <  U_{\sigma(n)} \qquad \text{and} \qquad V_{\gamma(1)} < \cdots < V_{\gamma(n)}.
\end{equation}
Further, let us define
 $$h(i_1,\ldots,i_k;j_1,\ldots,j_k) = \frac{1}{2^k} \prod_{s=1}^k  \left(\mathbf{1}(X_{i_s} \geq Y_{j_s}) + \mathbf{1}(X_{i_s} \leq Y_{j_s}) - \mathbf{1}(X_{i_s} > Y_{j_s}) - \mathbf{1}(X_{i_s} < Y_{j_s})\right),$$ 
 and $$\mathcal{S}_{1} = \{(i_1,\ldots,i_k), (j_1,\ldots,j_k): 1 \leq i_1 < \cdots < i_k \leq n, 1 \leq j_1 < \cdots < j_k \leq n\}$$ so that
$$T_{n,k} =_d \sum_{\mathcal{S}_1} h(i_1,\ldots,i_k;j_1,\ldots,j_k).$$ Now we observe that 
\begin{align*}
T_{n,k} =_d&  \sum_{\mathcal{S}_1} h(\sigma(i_1),\ldots,\sigma(i_k);\gamma(j_1),\ldots,\gamma(j_k))\\
=& \sum_{\mathcal{S}_2} h(\sigma(i_1),\ldots,\sigma(i_k);\gamma(j_1),\ldots,\gamma(j_k))  
\mathbf{1}(1 \leq i_1 < \cdots < i_k \leq n) \mathbf{1}(1 \leq j_1 < \cdots < j_k \leq n),
\end{align*}
where we set $$\mathcal{S}_2 = \{(i_1,i_2, \ldots, i_k): i_r \in [n], r =1,\ldots,k\}.$$ So, by \eqref{uv}
\begin{eqnarray*}
T_{n,k} &=_d&   \sum_{\mathcal{S}_2} h(\sigma(i_1),\ldots,\sigma(i_k);\gamma(j_1),\ldots,\gamma(j_k))  
\mathbf{1}(U_{\sigma(i_1)} < \cdots < U_{\sigma(i_k)})\\
&& \times \mathbf{1}(V_{\gamma(j_1)} < \cdots < V_{\gamma(j_k)}) \\
&=&  \sum_{\mathcal{S}_2} h(i_1,\ldots,i_k; j_1,\ldots,j_k)  
\mathbf{1}(U_{i_1} < \cdots < U_{i_k})  \mathbf{1}(V_{j_1} < \cdots < V_{j_k}).
\end{eqnarray*}
Now, define  
\begin{eqnarray*}
f((x_{i_1}, y_{i_1}, u_{i_1}, v_{i_1}), \ldots, (x_{i_k}, y_{i_k}, u_{i_k}, v_{i_k})) &=&  h(i_1,\ldots,i_k; j_1,\ldots,j_k)  
\mathbf{1}(u_{i_1} < \cdots < u_{i_k}) \\
&& \times   \mathbf{1}(v_{j_1} < \cdots < v_{j_k} )
\end{eqnarray*}
and $$g((x_{i_1}, y_{i_1}, u_{i_1}, v_{i_1}), \ldots, (x_{i_k}, y_{i_k}, u_{i_k}, v_{i_k})) = \sum f((x_{i_1}, y_{i_1}, u_{i_1}, v_{i_1}), \ldots, (x_{i_k}, y_{i_k}, u_{i_k}, v_{i_k})),$$ where the summation on right-hand side is over all $(i_1,\ldots,i_k) \in S_{i_1,\ldots,i_k}$ and $(j_1,\ldots,j_k) \in S_{j_1,\ldots,j_k}$ with $S_{i_1,\ldots,i_k}$ and $S_{j_1,\ldots,j_k}$ being all permutations of $i_1,\ldots,i_k$ and $j_1,\ldots,j_k$, respectively. 

Then we arrive at $$T_{n,k} =_d \sum_{\mathcal{S}_1} g((X_{i_1},Y_{i_1},U_{i_1},V_{i_1}), \ldots, (X_{i_k}, Y_{i_k}, U_{i_k}, V_{i_k})),$$ which is  recognized to be a $U$-statistic noting that  (i) $g$ is symmetric,
(ii)  $g$ is a function of random vectors whose coordinates are
  independent,  and that 
(iii)  $g \in L^2$. We need the following result of Chen and Shao to conclude the proof. 
\end{proof}

\begin{theorem}\label{thm:CSUstats}\cite{CS2007}
Let $X_1,\ldots,X_n$ be i.i.d. random variables, $\xi_n$ be a $U$-statistic
with symmetric kernel $g$, $\mathbb{E}[g(X_1,...,X_m)]=0,
\sigma^2=Var(g(X_1,\ldots,X_m))<\infty$ and
$\sigma_1^2=Var(g_1(X_1))>0.$  If in addition
$\mathbb{E}|g_1(X_1)|^3< \infty,$ then $$d_K \left(\frac{\sqrt{n}}{m
\sigma_1}\xi_n, \mathcal{G} \right)  \leq \frac{6.1 \,
\mathbb{E}|g_1(X_1)|^3}{\sqrt{n} \sigma_1^3}
+\frac{(1+\sqrt{2})(m-1)\sigma}{(m(n-m+1))^{1/2} \sigma_1}.
$$ 
\end{theorem}
Now, recalling the fact \cite{Lee} that $(m \sigma_1) / \sqrt{n} \sim \sqrt{Var(T_{n,k})}$, and using Slutsky's theorem   we conclude that $$\frac{T_{n,k} - \mathbb{E}[T_{n,k}]}{ \sqrt{Var(T_{n,k})}} \longrightarrow_d \mathcal{G}, \qquad n \rightarrow \infty,$$ as required.  The growth rate of $Var(T_{n,k})$ as a function of $n$ is not known to the authors, but the above theorem does not impose any condition on it. Note that one may further obtain convergence rates via Theorem \ref{thm:CSUstats}, but we do not go into details of this here.

\section{Computation time for $T_{n,k}$}
We are thankful to Michael Waterman, who provided us with the following algorithm for computing the number of $k$-long common subsequences of two random words. The algorithm uses dynamic programming similar to the case of finding the length of the longest common subsequence, which has a running time of $\Theta(n^2)$ \cite{W}. In our case, it requires $\Theta(kn^2)$ operations. A description of it is as follows.\\

First, we define 
\begin{equation*}
S_{l}(i,j)=\mathbf{1}(X_i=Y_j) \sum_{ i_1 < \cdots < i_l < i} \sum_{ j_1 < \cdots < j_l < j}  \mathbf{1} (X_{i_1} = Y_{j_1},\ldots, X_{i_l} = Y_{j_l}),
\end{equation*}
where $1\leq l \leq k.$ $S_l(i,j)$ counts the number of $l$-long subsequences ending exactly at i$th$ and j$th$ positions of the first and the second sequences respectively. Recursively, 
\begin{equation*}
S_{l}(i,j)=\mathbf{1}{(X_i=Y_j)} \big\{ \sum_{\substack{\alpha < i \\ 
\beta < j}} S_{l-1}(\alpha, \beta) + \sum_{\alpha < i} S_{l-1}(\alpha,j) + \sum_{\beta < j} S_{l-1}(i,\beta) \big\}. 
\end{equation*}
Next, define $T_l(i,j) = \sum_{\substack{\alpha \leq i \\ 
\beta \leq j}} S_{l-1}(\alpha, \beta), \,$ 
$C_l(i,j) = \sum_{\alpha \leq i} S_{l-1}(\alpha,j), \,$ and 
$R_l(i,j) =\sum_{\beta \leq j} S_{l-1}(i,\beta).$
%\begin{align*}
%T_l(i,j) &= \sum_{\substack{\alpha \leq i \\ 
%\beta \leq j}} S_{l-1}(\alpha, \beta) \\
%C_l(i,j) &= \sum_{\alpha \leq i} S_{l-1}(\alpha,j), \\
%R_l(i,j) &=\sum_{\beta \leq j} S_{l-1}(i,\beta),
%\end{align*},
It is easy to see that they satisfy the recursive relations below.
\begin{align} \label{rec1}
T_l(i,j) &=T_{l}(i\text{-}1,j\text{-}1)+C_l(i\text{-}1,j)+R_{l}(i,j\text{-}1)+S_l(i,j),  \notag \\
C_l(i,j) &= C_l(i\text{-}1,j) + S_l(i,j), \\
R_l(i,j) &= R_l(i,j\text{-}1)+ S_l(i,j). \notag
\end{align}
Then we can rewrite our counting function as
\begin{equation}\label{rec2}
S_{l}(i,j)=\mathbf{1}{(X_i=Y_j)} \big\{  T_{l\text{-}1}(i\text{-}1,j\text{-}1)+R_{l\text{-}1}(i,j\text{-}1)+C_{l\text{-}1}(i\text{-}1,j) \big\}.
\end{equation}

As we proceed from $l$ to $l+1$ through the algorithm, we need to go through \eqref{rec1} and \eqref{rec2} for all $(i,j),$ which requires a constant (independent of $n$ and $k$) times $n^2$ operations. The total number of $k$-long common subsequences is given by the largest $S_k(i,j),$ which is increasing both in $i$ and $j$ unless it is zero. Therefore, the total running time to compute $T_{n,k}$ is $\Theta(kn^2).$ Since we need a constant times $n^2$ operations to obtain $T_{n,k+1}$ once we run the algorithm for $T_{n,k},$ the running time is $\Theta(n^3)$ to compute $T_n.$

\section{Asymptotics of $\mathbb{E}[T_n]$ for growing alphabet} \label{sec:alphabet}

An immediate corollary to Theorem \ref{momTnk}  if $a$ is a fixed number is   $$\mathbb{E}[T_n] = \sum_{k=1}^n \mathbb{E}[T_{n,k}]  =   \sum_{k=1}^n \binom{n}{k}^2 \frac{1}{a^k}.$$ Our purpose in this section is to see the effect of changing $a$ along with $n$ as $n \rightarrow \infty$. Results of this section are summarized as follows. 

\begin{theorem}\label{thm:ETnasypmtotics}
Let $a_n=an^{\alpha}$ be the size of the alphabet where $n$ is the length of the sequences and $a, \alpha$ be positive constants. Define $k^* = \frac{n}{1  + \sqrt{a n^{\alpha}}}$.Then, as $n\rightarrow \infty$, the asymptotic behavior of $\mathbb{E}[T_n]$ with respect to $a_n$ is summarized in the table below.

\vspace{0.11in}

\renewcommand{\arraystretch}{1.8}
\begin{center}
\begin{tabular}{|c|c|}
\hline
$\alpha(a_n=an^{\alpha})$ & $\mathbb{E}[T_n]= \sum_{k=1}^{n}\binom{n}{k}^2 \frac{1}{a_n^{k}} \sim$  \\ \hline 
$0$ &   $\frac{\sqrt[4]{a}}{2 \sqrt{\pi n}} \Big(1+\frac{1}{\sqrt{a}} \Big)^{2n+1}$   \\  \hline
$(0,1/2)$ &  $
\frac{\sqrt[4]{an^{\alpha}}}{2 \sqrt{\pi n}} e^{-\frac{k^*{^2}}{2n} (1  + o(1)) } e^{\frac{2n}{1+ \sqrt{an^{\alpha}}}} \Big(1+ \frac{1}{\sqrt{an^{\alpha}}} \Big)^{\frac{2n}{1+ \sqrt{an^{\alpha}}}}$  \\ \hline   
$[1/2, 2/3)$  & $
\frac{\sqrt[4]{an^{\alpha}}}{2 \sqrt{\pi n}} e^{-\frac{k^*{^2}}{2n} -\frac{k^*{^3}}{6n^2}} e^{\frac{2n}{1+ \sqrt{an^{\alpha}}}} \Big(1+ \frac{1}{\sqrt{an^{\alpha}}} \Big)^{\frac{2n}{1+ \sqrt{an^{\alpha}}}}$  \\  \hline
$[2/3, 1)$  & $\frac{\sqrt[4]{an^{\alpha}}}{2 \sqrt{\pi n}} e^{\frac{2}{\sqrt{a}}n^{1- \alpha/2} - \frac{1}{2} \frac{1}{1+\sqrt{an^{\alpha}}}} $ \\  \hline
$1$ &   $\frac{\sqrt[4]{a}}{2 \sqrt{\pi}} e^{\frac{3}{2a}} n^{-1/4} e^{\frac{2}{\sqrt{a}} n ^{1/2}}$  \\  \hline
$(1,2)$  & $\frac{\sqrt[4]{an^{\alpha}}}{2 \sqrt{\pi n}} e^{\frac{2}{\sqrt{a}}n^{1- \alpha/2}}$  \\ \hline \hline
Unif. Perm. & $\frac{1}{2 \sqrt{\pi e}}n^{-1/4} e^{2 n^{1/2}}$ \textnormal{\cite{LP1981}} \\ \hline
\end{tabular} 
\end{center}
\footnote{The last line gives the asymptotic behavior of $T_n((\pi_1,\dots,\pi_n),(\sigma_1,\dots,\sigma_n))$ where $\pi=(\pi_1,\dots, \pi_n)$ and $\sigma=(\sigma_1,\dots, \sigma_n)$ are uniform random permutations of the set $[n]$.}
\end{theorem}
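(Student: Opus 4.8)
The plan is to read $\mathbb{E}[T_n]=\sum_{k=1}^{n}\binom{n}{k}^2a_n^{-k}$ as one strongly unimodal sum and to extract each row by a Laplace (saddle-point) analysis. Setting $c_k=\binom{n}{k}^2a_n^{-k}$, the ratio $c_{k+1}/c_k=\big(\tfrac{n-k}{k+1}\big)^2 a_n^{-1}$ passes through $1$ at the maximizer of the continuous log-summand, which is exactly $k^*=n/(1+\sqrt{a_n})$; this is why $k^*$ organizes every entry. I would first prove that the mass concentrates in a window of width $w\asymp\sqrt{n}\,a_n^{-1/4}$ about $k^*$, bounding the (essentially geometric) tails, and that, because $w\gg1$ throughout $\alpha\in[0,2)$, the Euler--Maclaurin/Gaussian estimate $\mathbb{E}[T_n]\sim c_{k^*}\sqrt{2\pi/|g''(k^*)|}$ is valid with $g=\log c_k$ and $|g''(k^*)|=2n/(k^*(n-k^*))$. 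This step alone produces the universal prefactor $\tfrac{a_n^{1/4}}{2\sqrt{\pi n}}$ shared by all rows.

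Two structurally distinct regimes then appear according to where $k^*$ lies. When $\alpha=0$ the peak $k^*$ is a fixed fraction of $n$, so I would expand all three factorials in $\binom{n}{k^*}$ by full Stirling (the entropy form); after folding in the Laplace normalization this collapses to $\tfrac{a^{1/4}}{2\sqrt{\pi n}}\big(1+a^{-1/2}\big)^{2n+1}$. When $\alpha>0$ one has $k^*=o(n)$, and it is cleaner to use $\binom{n}{k}=\tfrac{n^{k}}{k!}\exp\big(-\tfrac{k^{2}}{2n}-\tfrac{k^{3}}{6n^{2}}-\cdots\big)$, so that $c_k=\tfrac{(n^2/a_n)^k}{(k!)^2}\exp\big(-\tfrac{k^{2}}{n}-\tfrac{k^{3}}{3n^{2}}-\cdots\big)$, each correction doubled by the square. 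Discarding the correction factor leaves $\sum_k\tfrac{(n^2/a_n)^k}{(k!)^2}=I_0\!\big(2n/\sqrt{a_n}\big)$, whose large-argument asymptotic recovers the prefactor together with the leading exponential $e^{2n/\sqrt{a_n}}=e^{2a^{-1/2}n^{1-\alpha/2}}$, equivalently $e^{2k^*}\big(1+a_n^{-1/2}\big)^{2k^*}$ up to the cubic order.

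The four positive-$\alpha$ rows are separated by how many correction terms survive. The $j$-th correction contributes to the exponent at order $k^{*j}/n^{\,j-1}\asymp n^{1-j\alpha/2}$, which is $o(1)$, $\Theta(1)$, or divergent according as $\alpha\gtrless 2/j$; the thresholds $j=2,3,4$ give exactly the cut-offs $\alpha=1,\tfrac23,\tfrac12$. Retaining precisely the non-$o(1)$ terms yields each line: nothing past the prefactor on $(1,2)$; one $\Theta(1)$ (quadratic) correction at $\alpha=1$; the leading exponential in closed form on $[\tfrac23,1)$, where the product form and the cubic correction differ only by $o(1)$; the product form with quadratic and cubic corrections on $[\tfrac12,\tfrac23)$; and, on $(0,\tfrac12)$, the dominant quadratic term carrying a factor $(1+o(1))$ that absorbs the infinitely many remaining non-negligible corrections. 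The permutation row is not a random-word case: there the statistic reduces to $\sum_k n^{\underline{k}}/(k!)^2$, to which the same saddle-point analysis applies (the word sums differ only by the extra power of the falling-factorial ratio), and I would record its value from the classical estimate \cite{LP1981}.

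The principal obstacle is not finding the saddle but \emph{tracking every $\Theta(1)$-order contribution} so as to pin down the multiplicative constants row by row. The surviving Stirling corrections to $\binom{n}{k^*}^2$, the $O(1)$ gap between $k^*$ and the exact saddle, and the Gaussian normalization each feed both the constant prefactor and the constant part of the exponent, so the expansions must be carried one order further than the naive count suggests and the coefficients watched with care; this is the delicate, error-prone heart of the computation. A secondary difficulty is making the tail bounds and the truncation of the correction series uniform across each half-open $\alpha$-interval, and separately handling the endpoints $\alpha\in\{\tfrac12,\tfrac23,1\}$ and the matching between the entropy form $(\alpha=0)$ and the Bessel form $(\alpha>0)$, none of which follows from a single uniform estimate.
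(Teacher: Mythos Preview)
Your plan is correct and is essentially the same Laplace/peak-term argument the paper uses: locate the maximizing index $k^*=n/(1+\sqrt{a_n})$ via the term ratio, show the summand behaves like a Gaussian of curvature $|g''(k^*)|=2A_n/n$ with $A_n=(1+\sqrt{a_n})^2/\sqrt{a_n}$, bound the tails outside a window of width $\asymp\sqrt{n/A_n}$, and then split into regimes according to how many Stirling correction terms for $\binom{n}{k^*}$ survive; the paper does exactly this (citing Spencer's \emph{Asymptopia} for the binomial-power technique and for the regime table of $\binom{n}{k^*}$), and finishes the rows $\alpha\in(2/3,2)$ with a separate elementary lemma showing $\big(1+a_n^{-1/2}\big)^{2k^*}\sim e^{2n/(a_n+\sqrt{a_n})}$.

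Two cosmetic differences are worth noting. First, your recognition of the correction-free sum as $I_0(2n/\sqrt{a_n})$ is a clean shortcut the paper does not use; it buys an instant derivation of the leading exponential $e^{2a^{-1/2}n^{1-\alpha/2}}$ and of the $a_n^{1/4}/(2\sqrt{\pi n})$ prefactor from the standard large-argument asymptotic of $I_0$, whereas the paper reaches the same prefactor by explicitly applying Stirling to $(k^*)!$ inside $\binom{n}{k^*}^2 a_n^{-k^*}\sqrt{\pi n/A_n}$. Second, your threshold bookkeeping via $k^{*j}/n^{\,j-1}\asymp n^{1-j\alpha/2}$ is a tidier way to read off the cut-points $\alpha=1,\tfrac23,\tfrac12$ than the paper's case table, but leads to the identical partition. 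One small caveat: the sentence ``this step alone produces the universal prefactor'' slightly overstates what the Gaussian normalization gives; the factor $\sqrt{2\pi/|g''(k^*)|}=\sqrt{\pi n/A_n}$ still has to be combined with the $1/\sqrt{2\pi k^*}$ from Stirling before the $a_n^{1/4}/(2\sqrt{\pi n})$ emerges, so keep that bookkeeping explicit when you write it out.
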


\begin{proof}
The proof uses the technique in Chapter 5 of \cite{S14}, which is used for sums of binomial coefficient powers therein. In our case, the sum includes also an exponential term, which yields asymmetric distribution of terms around the maximum term unlike the binomial coefficient only case. But the terms to the right and the terms to the left to the maximum term are dealt in the same manner as shown below. \\

We start with locating the maximum term of the sum, then evaluate the sum of the other terms with respect to the maximum term. \\

We first observe that the ratio of two consecutive terms is
\begin{equation*}
\binom{n}{k+1}^2 \frac{1}{a_n^{k+1}} \Big/ \binom{n}{k}^2 \frac{1}{a_n^k} = \Big(\frac{n-k}{k+1} \Big)^2 \frac{1}{a_n}.
\end{equation*}
Since the ratio is monotone decreasing, the sequence of terms in the sum is unimodal. The maximum term occurs for the first $k$ where the fraction above is less than one. Observe that 
\begin{align*}
(n-k)^2 < a_n (k+1)^2 \Leftrightarrow&   (n-k) < \sqrt{a_n} (k+1) \\
\Leftrightarrow& \frac{1}{1+\sqrt{a_n}}n - \frac{\sqrt{a_n}}{1+\sqrt{a_n}} < k, \\
\Leftrightarrow&  k_{max}\in \Big[\frac{1}{1+\sqrt{a_n}}n - \frac{\sqrt{a_n}}{1+\sqrt{a_n}}, \frac{1}{1+\sqrt{a_n}}n + \frac{1}{1+\sqrt{a_n}} \Big].
\end{align*}
Let $k^*=\frac{1}{1+\sqrt{a_n}}n$, which lies in the same interval with $k_{max}.$ Since we are interested only in the asymptotics of  $\binom{n}{k_{max}},$ it is justified to work with $\binom{n}{k^*}.$ \\

Then, we consider the remaining terms. First we take the higher indexed terms, namely $k>k^*.$ 
Let $k=k^* +i,$ $i > 0$,  and also set $A_n = \frac{(1+\sqrt{a_n})^2}{\sqrt{a_n}}$. Referring to the method discussed in Chapter 5 of  \cite{S14}, defining $$R :=  \frac{\binom{n}{k}^2 \frac{1}{a_n^{k}}}{\binom{n}{k^*}^2 \frac{1}{a_n^{k^*}}},$$ we have
$$
R 
= \frac{\binom{n}{k^*+i}^2 \frac{1}{a^{k^*+i}}}{\binom{n}{k^*}^2 \frac{1}{a^{k^*}}} 
 =  a_n^{-i} \frac{(n-k^*)^2 \cdots (n-k^*-i+1)^2}{(k^*+i)^2 \cdots (k^*+1)^2}.
$$
It follows that
\begin{align*}
\ln R =& -i \ln a_n +2 \sum_{j=1}^i \ln \left(\frac{n-k^*-j+1}{k^*+j} \right) \\
=& 2 \sum_{j=1}^i \left( \ln \left(\frac{n-k^*-j+1}{k^*+j} \right) - \ln \sqrt{a_n} \right) \\
=& 2 \sum_{j=1}^i \left( \ln \left(\frac{n-\frac{1}{1+\sqrt{a_n}}n-j+1}{\frac{1}{1+\sqrt{a_n}}n+j} \right) - \ln \sqrt{a_n} \right) \\
=& 2 \sum_{j=1}^i  \ln \left(\frac{\frac{\sqrt{a_n}}{1+\sqrt{a_n}}n-j+1}{\frac{\sqrt{a_n}}{1+\sqrt{a_n}}n+\sqrt{a_n}j} \right)  \\
=& 2 \sum_{j=1}^i  \ln \left(1- \frac{(1+\sqrt{a_n})j-1}{\frac{\sqrt{a_n}}{1+\sqrt{a_n}}n+\sqrt{a_n}j}\right) \\
=& 2 \sum_{j=1}^i  \ln \left(1-A_n \frac{j}{n+ (1+\sqrt{a_n})j} + \frac{1+\sqrt{a_n}}{\sqrt{a_n}} \frac{1}{n+ (1+\sqrt{a_n})j}\right) \\
=& 2 \sum_{j=1}^i  \left(-A_n \frac{j}{n+ (1+\sqrt{a_n})j} + \frac{1+\sqrt{a_n}}{\sqrt{a_n}} \frac{1}{n+ (1+\sqrt{a_n})j} +\Theta(\frac{A_n^2 j^2}{n^2}) \right) \\
 \sim & -2A_n \frac{i(i+1)}{2n} + 2 \frac{1+\sqrt{a_n}}{\sqrt{a_n}} \frac{i}{n} +\Theta(\frac{A_n^2 i^3}{n^2}) \\
=& \frac{-A_n i^2}{n}+ \frac{-A_n i}{n} + 2 \left(1+ \frac{1}{\sqrt{a_n}}\right) \frac{i}{n} +\Theta(\frac{A_n^2 i^3}{n^2}). 
\end{align*}
as long as $i=o\Big(\sqrt[3]{\frac{n^2}{a_n}}\Big)$ since $A_n =\Theta(\sqrt{a_n}).$ Another observation is that the first term is the dominant one provided that $a_n=o(n^2).$  \\

The case for  $k < k^*$ is similar. Taking $k=k^*-i$,  we have 
\begin{equation*}
R =  \frac{\binom{n}{k}^2 \frac{1}{a_n^{k}}}{\binom{n}{k^*}^2 \frac{1}{a_n^{k^*}}} 
 =  \frac{\binom{n}{k^*-i}^2 \frac{1}{a_n^{k^*-i}}}{\binom{n}{k^*}^2 \frac{1}{a_n^{k^*}}} 
 =  a_n^i \frac{(k^*)^2 \cdots (k^*-i+1)^2}{(n-k^*+i)^2 \cdots (n-k^*+1)^2}.
\end{equation*}
Then, similar computations yield
\begin{align*}
\ln R =& i \ln a_n + 2 \sum_{j=1}^i \ln \Big( \frac{k^*-j+1}{n-k^*+j}  \Big) \\
 \sim & -2A_n \frac{i(i+1)}{2n} + 2 (1+\sqrt{a_n}) \frac{i}{n} +\Theta(\frac{A_n^2 i^3}{n^2}) \\
=& \frac{-A_n i^2}{n}+ \frac{-A_n i}{n} + 2 (1+\sqrt{a_n}) \frac{i}{n} +\Theta(\frac{A_n^2 i^3}{n^2}).
\end{align*}
Altogether, we have
\begin{equation*}
\binom{n}{k}^2 \frac{1}{a_n^{k}} \sim \binom{n}{k^*}^2 \frac{1}{a_n^{k^*}} e^{\frac{-A_n i^2}{n}}.
\end{equation*}
where $k=k^* \pm i$, $a_n=o(n^2)$ and $i=o\Big(\sqrt[3]{\frac{n^2}{a_n}}\Big).$ We parametrize 
\begin{equation} \label{param}
k=k^{*} \pm  c \sqrt{\frac{n}{A_n}}
\end{equation}
where $c$ is a constant. Therefore, we have
\begin{equation*}
\binom{n}{k}^2 \frac{1}{a_n^{k}} \sim \binom{n}{k^*}^2 \frac{1}{a_n^{k^*}} e^{-c^2}.
\end{equation*}
Given the restriction $i=o\Big(\sqrt[3]{\frac{n^2}{a_n}}\Big)$, summing over the expression above all $c \in \mathbb{R}$ to find the asymptotics of the sum does not seem accurate at first glance. In order to see that it gives the correct asymptotics, we find an appropriate range for $k$ where the sum of terms is in agreement with the sum of all terms asymptotically. Similar to the argument in \cite{S14}, consider $[k^{-}, k^{+}]=[k^{*}- 2 \sqrt{\frac{n \ln n}{A_n}}, k^{*}+ 2 \sqrt{\frac{n \ln n}{A_n}}].$ Since $ \sqrt{\frac{n \ln n}{A_n}} = o\left(\sqrt[3]{\frac{n^2}{a_n}}\right)$, we have
\begin{equation*}
\binom{n}{k^{+}}^2 \frac{1}{a_n^{k^{+}}} \sim \binom{n}{k^*}^2 \frac{1}{a_n^{k^*}} n^{-4},
\end{equation*}  
and
\begin{equation*}
\sum_{l \geq k^{+}} \binom{n}{l}^2 \frac{1}{a_n^{l}}= o \left(\binom{n}{k^*}^2 \frac{1}{a_n^{k^*}} n^{-3} \right).
\end{equation*}
Exactly the same argument for $k^{-}$ allows us to conclude that the sum of the terms out of the range is negligible compared to the maximum term. So, in $[k^{-}, k^{+}]$, according to the parametrization \eqref{param}, we have
\begin{equation*}
\sum_{k=1}^{n} \binom{n}{k}^2 \frac{1}{a_n^k} \sim  \binom{n}{k^*}^2 \frac{1}{a_n^{k^*}} \sum_{k=1}^n  e^{\frac{-A_n (k-k^*)^2}{n}}  
 =   \binom{n}{k^*}^2 \frac{1}{a_n^{k^*}} \sum_{k}   e^{-c^2}.
\end{equation*}
 We can approximate the sum by the integral below.
\begin{eqnarray*}
 \binom{n}{k^*}^2 \frac{1}{a_n^{k^*}} \sum_{i=-k^* + 1}^{n-k^*}   e^{-c^2} & \sim   & \binom{n}{k^*}^2 \frac{1}{a_n^{k^*}} \int_{- \infty}^{\infty}  e^{-c^2} \sqrt{\frac{n}{A_n}} \, dc \\
 &= & \binom{n}{k^*}^2 \frac{1}{a_n^{k^*}} \sqrt{\frac{\pi n }{A_n}}. 
 \end{eqnarray*}
 Therefore, we have
 \begin{equation} \label{intapp}
 \mathbb{E}[T_n] =\sum_{k=1}^{n} \binom{n}{k}^2 \frac{1}{a_n^k} \sim \binom{n}{k^*}^2 \frac{1}{a_n^{k^*}} \sqrt{\frac{\pi n }{A_n}}.
\end{equation}
 
Finally, we evaluate $\binom{n}{k^*}$ asymptotically. We seperate into cases; each case corresponds to an interval on the order of $k^*,$ which is related to the order of $a_n$ through $k^*=\Theta(\frac{n}{\sqrt{a_n}}).$ We discuss only the case  where $a_n$ is a constant. All cases except the constant case are analyzed in \cite{S14} as much in detail as we need.\\

Suppose $a_n=a.$ Let $c=\frac{1}{1+\sqrt{a}}$. 
 Since $k^*=cn$ is linear in 
$k$,  we can apply Stirling's formula to obtain  
\begin{align*}
\binom{n}{k^*}=\binom{n}{cn} \sim & \frac{(n/e)^n}{(cn/e)^{cn} ((1-c)n/e)^{(1-c)n}} \frac{\sqrt{2\pi n}}{\sqrt{2 \pi n c} \sqrt{2 \pi n (1-c)}} \\
=& \frac{1}{c^{nc}(1-c)^{(1-c)n}} \frac{1}{\sqrt{2 \pi n c (1-c)}}.
\end{align*} 
For simplicity, we may write the expression in terms of $a$ and $k^*$ as 
\begin{equation*}
\frac{(1+\sqrt{a})^{n+1}}{\sqrt{2 \pi n} \sqrt{a}^{k^*+1}}.
\end{equation*}
Combining with the results in \cite{S14}, we list the limiting behavior of $\binom{n}{k^*}$ corresponding to different orders of $a_n$ in the table below.
\renewcommand{\arraystretch}{1.5}
\begin{center}
\begin{tabular}{|c|c|c|}
\hline
$\alpha(a_n=an^{\alpha})$ &  $\beta (k^*=\mathcal{O}(n^{\beta}))$ & $\binom{n}{k^*} \sim  $ \\ \hline 
$0$ & $1$ & $\frac{(1+\sqrt{a})^{n+1}}{\sqrt{2 \pi n} \sqrt{a}^{k^*+1}}$ \\  \hline
 $(0,1/2)$ & $(3/4,1)$ & $e^{\frac{-k^*{^2}}{2n}(1+o(1))}\frac{n^{k^*}}{k^*!}$  \\ \hline   
 $[1/2, 2/3)$  & $(2/3,3/4)$ & $e^{-k^*{^2}/2n}e^{-k^*{^3}/6n^2}\frac{n^{k^*}}{k^*!}$ \\  \hline
 $[2/3, 1)$  & $(1/2,2/3)$ & $e^{-k^*{^2}/2n}\frac{n^{k^*}}{k^*!}$ \\  \hline
 $1$ & $1/2$ & $e^{-1/2a}\frac{n^{k^*}}{k^*!}$   \\  \hline
$(1,2)$  & $(0,1/2)$ & $\frac{n^{k^*}}{k^*!}$  \\ \hline 
\end{tabular} 
\end{center}

Thus, we can rewrite \eqref{intapp} more explicitly as 
\begin{equation*}
\mathbb{E}[T_n] \sim \frac{\sqrt{\pi n}\sqrt[4]{an^{\alpha}}}{1 + \sqrt{an^{\alpha}}} e^{2\kappa(\alpha)} \frac{n^{2\frac{n}{1+ \sqrt{an^{\alpha}}}}}{ \Big[ \Big( \frac{n}{1+ \sqrt{an^{\alpha}}} \Big)! \Big]^2} \frac{1}{(an^{\alpha})^{\frac{n}{1+ \sqrt{an^{\alpha}}}}},
\end{equation*}
where $\kappa(\alpha)$ is the exponent in the last column of the table above given by \[   \kappa(\alpha)=\left\{
\begin{array}{ll}
      -\frac{k^*{^2}}{2n}(1+o(1)) & 0 < \alpha < 1/2 \\
      -\frac{k^*{^2}}{2n}-\frac{k^*{^3}}{6n^2} & 1/2 \leq \alpha < 2/3 \\
      -\frac{k^*{^2}}{2n} & 2/3 \leq \alpha < 1 \\
      -\frac{1}{2a} &  \alpha =1  \\
      0 & 1 < \alpha < 2 \\
\end{array} 
\right. \]
Then, applying Stirling's formula to the factorial in the denominator above, and after cancellations, we eventually have
\begin{equation*}
\frac{\sqrt[4]{an^{\alpha}}}{2 \sqrt{\pi n}} e^{ 2\kappa(\alpha)} e^{\frac{2n}{1+ \sqrt{an^{\alpha}}}} \left(1+ \frac{1}{\sqrt{an^{\alpha}}} \right)^{\frac{2n}{1+ \sqrt{an^{\alpha}}}}.
\end{equation*}
 Further simplifications of the expression for $\alpha \in (2/3,2),$ which follows from Lemma \ref{calc} below, conclude the proof.  
 \end{proof}
\begin{lemma} \label{calc}
If $\alpha \in (2/3,2),$ then 
\begin{equation*}
\left(1+ \frac{1}{\sqrt{an^{\alpha}}} \right)^{\frac{2n}{1+ \sqrt{an^{\alpha}}}} \sim e^{\frac{2n}{an^{\alpha}+ \sqrt{an^{\alpha}}}}.
\end{equation*}
In particular, if $\alpha \in (1,2)$, then
\begin{equation*}
\left(1+ \frac{1}{\sqrt{an^{\alpha}}} \right)^{\frac{2n}{1+ \sqrt{an^{\alpha}}}} \sim 1.
\end{equation*}
\end{lemma}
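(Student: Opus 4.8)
The plan is to take logarithms of both sides and reduce the asymptotic equivalence $\sim$ to showing that the logarithm of the ratio of the two sides tends to zero. Write $s = \sqrt{an^{\alpha}}$ and $x = 1/s = 1/\sqrt{an^{\alpha}}$; since $\alpha > 0$ we have $x \to 0$ as $n \to \infty$. The logarithm of the left-hand side is then
$$\frac{2n}{1+s}\ln(1+x),$$
while the logarithm of the right-hand side is the bare exponent $\frac{2n}{an^{\alpha}+\sqrt{an^{\alpha}}}$.

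First I would isolate the linear term of the expansion $\ln(1+x) = x - \frac{x^2}{2} + O(x^3)$. The contribution of the linear part is
$$\frac{2n}{1+s}\cdot x = \frac{2n}{s(1+s)} = \frac{2n}{s+s^2} = \frac{2n}{\sqrt{an^{\alpha}}+an^{\alpha}},$$
which is \emph{exactly} the exponent on the right-hand side. Hence the claim reduces to showing that the remainder
$$\frac{2n}{1+s}\bigl(\ln(1+x)-x\bigr) = \frac{2n}{1+s}\Bigl(-\frac{x^2}{2}+O(x^3)\Bigr)$$
tends to zero.

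Next I would estimate this leading correction. Since $1+s \sim s = \sqrt{a}\,n^{\alpha/2}$ and $x^2 = 1/(an^{\alpha})$, the dominant term satisfies
$$\frac{n\,x^2}{1+s} \sim \frac{n}{\sqrt{a}\,n^{\alpha/2}}\cdot\frac{1}{an^{\alpha}} = \frac{1}{a^{3/2}}\,n^{1-3\alpha/2},$$
which tends to zero precisely when $1 - 3\alpha/2 < 0$, i.e. when $\alpha > 2/3$—exactly the hypothesis. The cubic and higher terms carry strictly more negative powers of $n$ and are a fortiori negligible, so the remainder vanishes and the first equivalence follows.

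For the particular case $\alpha \in (1,2)$, it remains only to observe that the right-hand side itself tends to $1$. Its exponent obeys
$$\frac{2n}{an^{\alpha}+\sqrt{an^{\alpha}}} \sim \frac{2n}{an^{\alpha}} = \frac{2}{a}\,n^{1-\alpha},$$
which tends to $0$ when $\alpha > 1$, so the exponential tends to $1$. The only point demanding care is the bookkeeping of the quadratic remainder and the verification that its threshold is precisely $\alpha = 2/3$; the rest is a routine application of the Taylor expansion of $\ln(1+x)$ together with the elementary equivalence $1+s \sim s$.
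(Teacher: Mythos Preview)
Your proof is correct and cleaner than the paper's. Both arguments ultimately reduce to showing that the logarithm of the ratio of the two sides tends to zero, but the organization differs. Writing $s=\sqrt{an^{\alpha}}$ and $g(n)=\dfrac{2n}{s^2+s}$, the paper factors the left-hand side as $\bigl[(1+1/s)^{s}\bigr]^{g(n)}$ and the right-hand side as $e^{g(n)}$, so the ratio is $[f(n)]^{g(n)}$ with $f(n)=(1+1/s)^{s}/e$; it then applies L'H\^opital's rule (twice in the sublinear case) before invoking the Taylor expansion of $\ln(1+u)$ to extract the threshold $\alpha=2/3$. You instead expand $\ln(1+x)$ directly with $x=1/s$, observe that the linear term reproduces the target exponent \emph{exactly}, and bound the quadratic remainder by $n^{1-3\alpha/2}$. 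Your route avoids L'H\^opital entirely and makes the appearance of the exponent $1-3\alpha/2$ (hence the cutoff $\alpha>2/3$) immediate; the paper's detour through $f(n)^{g(n)}$ buys nothing extra here, so your argument is a genuine simplification.
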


\begin{proof}
Define 
\begin{equation*}
f(n) = \frac{\left(1+ \frac{1}{\sqrt{an^{\alpha}}}\right)^{\sqrt{an^{\alpha}}}}{e}
\end{equation*}  and 

\begin{equation*}
g(n)=\frac{2n}{an^{\alpha}+ \sqrt{an^{\alpha}}}.
\end{equation*}
We can equivalently show that $\lim_{n \rightarrow \infty} [f(n)]^{g(n)} = 1$ for $\alpha \in (2/3,2).$ The proof relies on elementary techniques. First, evaluate the limit of the logarithm of the expression. We have 
\begin{align*}
\lim_{n \rightarrow \infty} \ln L =& \lim_{n \rightarrow \infty} g(n) \ln f(n) \\
=& \lim_{n \rightarrow \infty} \frac{2n}{an^{\alpha}+ \sqrt{an^{\alpha}}}\left( \sqrt{an^{\alpha}} \ln\left(1+ \frac{1}{\sqrt{an^{\alpha}}}\right) - 1 \right)\\
\end{align*}
Then L'Hopital's Rule gives
\begin{equation*}
\lim_{n \rightarrow \infty} \sqrt{an^{\alpha}} \ln\left(1+ \frac{1}{\sqrt{an^{\alpha}}}\right)=1.
\end{equation*}
If $\alpha \in(1,2),$ then $\lim_{n \rightarrow \infty} \ln L =0;$ therefore the claim is true for this case. \\
Now suppose $\alpha <1.$ We apply L'Hopital's rule to $\ln L$ one more time, which gives
\begin{eqnarray*}
\lim_{n \rightarrow \infty} \ln L &=& \lim_{n \rightarrow \infty} \frac{2 \sqrt{a}\alpha \ln\Big(1+ \frac{1}{\sqrt{an^{\alpha}}}\Big) n^{\alpha/2-1} - 2 \alpha \frac{1}{1+ \frac{1}{\sqrt{an^{\alpha}}}}n^{-1}}{\frac{2a(1- \alpha)}{2}n^{\alpha-2} + \frac{\sqrt{a}}{2}\Big(\frac{\alpha}{2}-1 \Big) n ^{\frac{\alpha}{2}-2}} \\
&=& \lim_{n \rightarrow \infty} C \Big(\sqrt{a n^{\alpha}} \ln\Big(1+ \frac{1}{\sqrt{an^{\alpha}}}\Big) - \frac{1}{1+ \frac{1}{\sqrt{an^{\alpha}}}}\Big) n^{1- \alpha} \\
\end{eqnarray*}
for some constant $C.$ Then we consider the Taylor expansion of the expression. Define $u=\sqrt{a n^{\alpha}} $. We have
\begin{align*}
\lim_{n \rightarrow \infty} \ln L  = & \lim_{u \rightarrow \infty} C \Big(u \ln \Big(1+ \frac{1}{u}\Big) - \frac{1}{1+ \frac{1}{u}} \Big) u^{\frac{2}{\alpha}- 2} \\
=& C \Big( u\sum_{i=1}^{\infty} \frac{(-1)^{i+1} u^{-i}}{i} -\sum_{i=0}^{\infty}     (-1)^i u^{-i}\Big)u^{\frac{2}{\alpha}- 2} \\
=& C   \Big( \sum_{i=1}^{\infty} \Big( \frac{(-1)^i}{i+1} - (-1)^i \Big) u ^{-i}            \Big) u^{\frac{2}{\alpha}- 2} \\
=& C \Big( \frac{1}{2u} - \frac{2}{3 u^2} + \frac{3}{4u^3}- \cdots \Big)u^{\frac{2}{\alpha}- 2}. \\
\end{align*}
Hence, $\lim_{n \rightarrow \infty} \ln L$ is zero as long as $\frac{2}{\alpha}- 2$ is less than one. Then the result follows.

\end{proof}

\bigskip

\acknowledgements The first author is supported by  the Scientific
and Research Council of Turkey [TUBITAK-117C047]. We would like to thank Michael Waterman for providing the algorithm in Chapter 5 and correcting the order of the computation time of the algorithm in the final version of the paper.

\nocite{*}
\bibliographystyle{abbrvnat}
% use the following instead if you encounter problems 
%\bibliographystyle{alpha}
\bibliography{steele}

\begin{thebibliography}{23}
\providecommand{\natexlab}[1]{#1}
\providecommand{\url}[1]{\texttt{#1}}
\expandafter\ifx\csname urlstyle\endcsname\relax
  \providecommand{\doi}[1]{doi: #1}\else
  \providecommand{\doi}{doi: \begingroup \urlstyle{rm}\Url}\fi

\bibitem[Chen and Shao(2007)]{CS2007}
L.~H.~Y. Chen and Q.~Shao.
\newblock Normal approximation for nonlinear statistics using a concentration
  inequality approach.
\newblock \emph{Bernoulli}, 13\penalty0 (2):\penalty0 581--599, 2007.

\bibitem[Chv\'{a}tal and Sankoff(1975)]{CS}
V.~Chv\'{a}tal and D.~Sankoff.
\newblock Longest common subsequences of two random sequences.
\newblock \emph{J. Appl. Probab.}, 12:\penalty0 306--315, 1975.

\bibitem[Conger and Viswanath(2007)]{CG2007}
M.~Conger and D.~Viswanath.
\newblock Normal approximations for descents and inversions of permutations of
  multisets.
\newblock \emph{J. Theoret. Prob}, \penalty0 (2):\penalty0 309--325, 2007.

\bibitem[De~Brujin(1957)]{B57}
N.~G. De~Brujin.
\newblock \emph{Asymptotic Methods in Analysis}.
\newblock North-Holland Publishing Co., Amsterdam, Netherlands, 1957.

\bibitem[Fulman(1998)]{Fulman1998}
J.~Fulman.
\newblock The combinatorics of biased riffle shuffles.
\newblock \emph{Combinatorica}, 18\penalty0 (2):\penalty0 173--184, 1998.

\bibitem[Goldstein and Waterman(1992)]{GW:92}
L.~Goldstein and M.~S. Waterman.
\newblock Poisson, compound poisson and process approximations for testing
  statistical significance in sequence comparisons.
\newblock \emph{Bulletin of Mathematical Biology}, 54\penalty0 (5):\penalty0
  785--812, 1992.

\bibitem[Gong et~al.(2015)Gong, Houdr\'e, and I\c{s}lak]{GHI:2015}
R.~Gong, C.~Houdr\'e, and U.~I\c{s}lak.
\newblock A central limit theorem for the optimal alignments score in multiple
  random words.
\newblock Preprint, 2015.

\bibitem[Houdr\'{e} and I\c{s}lak(2017)]{HoudreIslak:2014}
C.~Houdr\'{e} and U.~I\c{s}lak.
\newblock A central limit theorem for the length of the longest common
  subsequences in random words.
\newblock Preprint, 2017.

\bibitem[I\c{s}lak(2018)]{IO:2016}
A.~I\c{s}lak, \"{U. }and~\"{O}zdemir.
\newblock Asymptotic results on weakly increasing subsequences in random words.
\newblock \emph{Discrete Applied Mathematics}, 251:\penalty0 171--189, 2018.

\bibitem[Ke(2016)]{Jin}
J.~Ke.
\newblock The length of the longest common subsequence of two independent
  mallows permutations.
\newblock Preprint, 2016.

\bibitem[Lee(1990)]{Lee}
A.~J. Lee.
\newblock \emph{U-statistics: Theory and Practice}.
\newblock Marcel Dekker, New York, 1990.

\bibitem[Lember and Matzinger(2009)]{LM2009}
J.~Lember and H.~Matzinger.
\newblock Standard deviation of the longest common subsequence.
\newblock \emph{Ann. Probab.}, 37\penalty0 (3):\penalty0 1192--1235, 2009.

\bibitem[Lifschitz and Pittel(1981)]{LP1981}
V.~Lifschitz and B.~Pittel.
\newblock The number of increasing subsequences of the random permutation.
\newblock \emph{J. Comb. Theory Ser. A}, 31:\penalty0 1--20, 1981.

\bibitem[Pinsky(2006)]{Pinsky2006}
R.~Pinsky.
\newblock Law of large numbers for increasing subsequences of random
  permutations.
\newblock \emph{Random Structures and Algorithms}, 29\penalty0 (3):\penalty0
  277--295, 2006.

\bibitem[Reinert and Waterman(2007)]{RW:06}
G.~Reinert and M.~S. Waterman.
\newblock On the length of the longest exact position match in a random
  sequence.
\newblock \emph{IEEE/ACM transactions on computational biology and
  bioinformatics}, 4:\penalty0 1, 2007.

\bibitem[Reinert et~al.(2000)Reinert, Schbath, and Waterman]{RSW:00}
G.~Reinert, S.~Schbath, and M.~S. Waterman.
\newblock Probabilistic and statistical properties of words: an overview.
\newblock \emph{Journal of Computational Biology}, 7\penalty0 (1-2):\penalty0
  1--46, 2000.

\bibitem[Reinert et~al.(2009)Reinert, Chew, Sun, and Waterman]{Retal09}
G.~Reinert, D.~Chew, F.~Sun, and M.~S. Waterman.
\newblock Alignment-free sequence comparison (i): statistics and power.
\newblock \emph{Journal of Computational Biology}, 16\penalty0 (12):\penalty0
  1615--1634, 2009.

\bibitem[Spencer and Florescu(2014)]{S14}
J.~Spencer and L.~Florescu.
\newblock \emph{Asymptopia}.
\newblock A. M. S., Providence and Rhode Island, 2014.

\bibitem[Steele(1982)]{Steele1982}
M.~J. Steele.
\newblock Long common subsequences and the proximity of two random strings.
\newblock \emph{SIAM Journal on Applied Mathematics}, 42\penalty0 (4):\penalty0
  731--737, 1982.

\bibitem[Wagner and Fischer(1974)]{W}
R.~A. Wagner and M.~J. Fischer.
\newblock The string-to-string correction problem.
\newblock \emph{Journal of the ACM}, 21\penalty0 (1):\penalty0 168--173, 1974.

\bibitem[Wan et~al.(2010)Wan, Reinert, Sun, and Waterman]{Wanetal}
L.~Wan, G.~Reinert, F.~Sun, and M.~S. Waterman.
\newblock Alignment-free sequence comparison (ii): theoretical power of
  comparison statistics.
\newblock \emph{Journal of Computational Biology}, 17\penalty0 (11):\penalty0
  1467--1490, 2010.

\bibitem[Waterman(1984)]{W:84}
M.~S. Waterman.
\newblock General methods of sequence comparison.
\newblock \emph{Bulletin of Mathematical Biology}, 46\penalty0 (4):\penalty0
  473--500, 1984.

\bibitem[Waterman(1986)]{W:86}
M.~S. Waterman.
\newblock Probability distributions for dna sequence comparisons.
\newblock \emph{Lectures on Mathematics in the Life Sciences}, 17:\penalty0
  29--56, 1986.

\end{thebibliography}
\label{sec:biblio}

\end{document}